\newtheorem{lemma}{Lemma}
\newtheorem{proposition}{Proposition}
\newtheorem{theorem}{Theorem}
\begin{document}
   
\title{Selberg's central limit theorem for $\log |\zeta(\tfrac 12+it)|$}
\  \author{Maksym Radziwi\l\l \   and K. Soundararajan} 
 \address{Department of Mathematics \\ Rutgers University \\ 110 Frelinghuysen Rd. \\ Piscataway \\  NJ 08854-8019} 
 \email{maksym.radziwill@gmail.com}
\address{Department of Mathematics \\ Stanford University \\
450 Serra Mall, Bldg. 380\\ Stanford \\ CA 94305-2125}
\email{ksound@math.stanford.edu}
\thanks{The first author was partially supported by NSF grant DMS-1128155. The second author is partially supported by NSF grant DMS-1500237, and a Simons Investigator award from the Simons Foundation} 

\date{\today}
\maketitle

\section{Introduction} 
\noindent
In this paper we give a new and simple proof of Selberg's influential theorem \cite{Selberg1, Selberg2} that $\log |\zeta(\tfrac 12+it)|$ has an approximately normal distribution with mean zero and variance $\tfrac 12 \log \log |t|$.  Apart from some basic facts about the Riemann zeta function, we have tried to make our proof  self-contained. 

\begin{theorem} \label{mainthm} Let $V$ be a fixed real number.  Then for all large $T$, 
$$ 
\frac{1}{T} \text{meas}\Big\{ T\le t\le 2T: \ \ \log |\zeta(\tfrac 12+it)| \ge V \sqrt{\tfrac 12 \log \log T} \Big\}  
\sim \frac{1}{\sqrt{2\pi}} \int_{V}^{\infty} e^{-u^2/2} du. 
$$ 
\end{theorem}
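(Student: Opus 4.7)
\medskip
\noindent\textbf{Proof plan.} The plan is to reduce Selberg's theorem to a central limit theorem for a short Dirichlet polynomial over primes, and to handle that polynomial by the method of moments. Set $X = T^{1/\log \log T}$ and consider
\[
\mathcal{P}(t) \assign \mathrm{Re}\sum_{p \le X}\frac{1}{p^{1/2+it}}.
\]
I would aim to prove two assertions: first, that $\mathcal{P}(t)/\sqrt{\tfrac12\log\log T}$ converges in distribution, as $t$ is sampled uniformly from $[T,2T]$, to a standard Gaussian; and second, that
\[
\tfrac{1}{T}\,\mathrm{meas}\Big\{ t \in [T,2T] : \big| \log|\zeta(\tfrac12+it)| - \mathcal{P}(t) \big| > (\log\log T)^{1/3} \Big\} = o(1).
\]
Combined, these statements yield the theorem after normalising by $\sqrt{\tfrac12\log\log T}$.

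The first assertion I would establish by computing the even moments $\tfrac1T\int_T^{2T}\mathcal{P}(t)^{2k}\,\mathrm{d}t$ for every $k \le \log T/(10\log X)$. Expanding the $2k$-th power as a sum over $2k$-tuples of primes $\le X$ and using $\int_T^{2T}(p/q)^{it}\,\mathrm{d}t \ll \min(T, 1/|\log(p/q)|)$, the off-diagonal terms contribute negligibly in the prescribed range of $k$, while the diagonal contributes $\tfrac{(2k)!}{k!\,2^k}\bigl(\tfrac12 \sum_{p \le X} 1/p\bigr)^k$. By Mertens' theorem this is asymptotically the $2k$-th moment of a Gaussian of variance $\tfrac12 \log \log T$, and since these moments grow slowly enough to determine the distribution, a standard method-of-moments argument yields Gaussian behaviour for $\mathcal{P}(t)$.

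The main obstacle is the second assertion, namely the approximation of $\log|\zeta(\tfrac12+it)|$ by $\mathcal{P}(t)$ off an exceptional set of small measure. To address it I would work at a ``safe'' line $\sigma_0 = \tfrac12 + W/\log T$ with $W$ a slowly growing parameter. On this line $\log\zeta(\sigma_0+it)$ admits, by a contour integral or a Selberg-type identity, an expansion of the form $\sum_{n \le X}\Lambda(n) n^{-\sigma_0 - it}/\log n$ plus a small error, and this sum differs from $\mathcal{P}(t)$ only by prime powers $p^k$ with $k \ge 2$, contributing $O(1)$. To pass from $\sigma_0$ down to the critical line I would use a Hadamard--Jensen identity that writes $\log|\zeta(\tfrac12+it)|$ as the harmonic mean of $\log|\zeta|$ over a small circle around $\tfrac12+it$ minus a contribution from the zeros $\rho$ lying inside that circle. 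For a set of $t$ of density $1$ in $[T,2T]$ the zero contribution is $O(1)$ (by Riemann--von Mangoldt applied on the appropriate scale), and the harmonic mean is close to $\log|\zeta(\sigma_0+it)|$ by a second-moment bound on $\zeta$ slightly off the critical line. Showing that the set of $t$ where any of these steps fails has measure $o(T)$ is, I expect, the delicate part of the proof, and will require combining a zero-density estimate with mean-value bounds for $\zeta$ just to the right of the critical line.
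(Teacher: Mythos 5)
Your first assertion (the central limit theorem for the prime sum $\mathcal{P}(t)$ via moments) is sound and is essentially the paper's Proposition 2; the diagonal/off-diagonal analysis and the range of $k$ you allow are both adequate. Your reduction of the theorem to the two assertions is also the right architecture. The problem is entirely in the second assertion.

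The step ``on the line $\sigma_0=\tfrac12+W/\log T$, $\log\zeta(\sigma_0+it)$ admits, by a contour integral or a Selberg-type identity, an expansion $\sum_{n\le X}\Lambda(n)n^{-\sigma_0-it}/\log n$ plus a small error'' is not a routine lemma: it is the heart of Selberg's original proof, and it is false as stated without excluding an exceptional set controlled by the zeros of $\zeta$ lying near the segment. With $W=o(\sqrt{\log\log T})$ you are well inside the region where zeros live, so any explicit formula for $\log\zeta$ carries a sum over zeros $\rho$ close to $\sigma_0+it$, and showing that this sum is small for almost all $t$ requires a quantitative zero-density estimate of the form $N(\sigma,T)\ll T^{1-c(\sigma-1/2)}\log T$ near $\sigma=\tfrac12$, together with Selberg's rather intricate weighting of $\Lambda(n)$. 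You name this input at the end but do not supply it, and it is precisely the part of the classical argument that resists a short treatment. The paper's entire point is to bypass this: instead of expanding $\log\zeta(\sigma_0+it)$ over primes directly, it introduces a short mollifier $M(s)=\sum_n \mu(n)a(n)n^{-s}$, proves $\int_T^{2T}|1-\zeta(\sigma_0+it)M(\sigma_0+it)|^2\,dt=o(T)$ by an elementary mean-value computation (so $\zeta\approx M^{-1}$ off a set of measure $o(T)$, with no reference to zeros), and separately proves $M(\sigma_0+it)\approx\exp(-\mathcal{P}(\sigma_0+it))$ for almost all $t$ by truncating the exponential series --- a step that itself requires splitting $\mathcal{P}$ into primes below $Y=T^{1/(\log\log T)^2}$ and primes in $(Y,X]$, since a single exponential series of length $\gg\log\log T$ would produce a Dirichlet polynomial too long for the mean-value theorem. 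If you want to complete your proposal along the lines you sketch, you must actually prove the zero-density estimate and the explicit formula; otherwise you should replace that step by the mollifier argument. (Your passage from $\sigma_0$ to $\tfrac12$ via a Jensen/harmonic-mean identity is workable and is close in spirit to the paper's Proposition 1, which does it more simply by an $L^1$ bound over a unit window using the Hadamard factorization.)
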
 
We outline the steps of the proof.  The first step is to show that $\log |\zeta(\tfrac 12+it)|$ is usually close to $\log 
|\zeta(\sigma+it)|$ for suitable $\sigma$ near $\tfrac 12$.   
 
 \begin{proposition} 
\label{Prop1}  Let $T$ be large, and suppose $T\le t\le 2T$.  Then for any $\sigma >1/2$ we have 
$$ 
\int_{t-1}^{t+1} \Big| \log |\zeta(\tfrac 12+iy)| - \log |\zeta(\sigma+iy)| \Big| dy \ll (\sigma -\tfrac 12) \log T. 
$$ 
\end{proposition}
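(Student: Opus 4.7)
The plan is to convert the difference into an explicit finite sum over zeros near height $t$, and then bound the $y$-integral contribution of each zero separately. For $y\in[t-1,t+1]$ outside the (measure-zero) set of ordinates of zeros, one has
$$
\log|\zeta(\sigma+iy)|-\log|\zeta(\tfrac 12+iy)| = \int_{1/2}^{\sigma}\operatorname{Re}\frac{\zeta'}{\zeta}(\alpha+iy)\,d\alpha.
$$
The classical partial fraction expansion
$$
\frac{\zeta'}{\zeta}(\alpha+iy) = \sum_{\rho:\,|\gamma-t|\le 2}\frac{1}{\alpha+iy-\rho}+O(\log T),
$$
uniform for $\alpha\in[-1,2]$ and $y\in[t-1,t+1]$, lets one perform the $\alpha$-integration term by term. (The usual version has summation range $|\gamma-y|\le 1$; passing to the slightly larger $|\gamma-t|\le 2$ adds only $O(\log T)$, since every additional zero has $|\alpha+iy-\rho|\ge|y-\gamma|>1$ and there are $O(\log T)$ such zeros.) The result is the clean identity
$$
\log|\zeta(\sigma+iy)|-\log|\zeta(\tfrac 12+iy)| = \frac 12 \sum_{|\gamma-t|\le 2}\log\frac{(\sigma-\beta)^2+(y-\gamma)^2}{(\tfrac 12-\beta)^2+(y-\gamma)^2}+O\bigl((\sigma-\tfrac 12)\log T\bigr).
$$

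Next I would take absolute values, apply the triangle inequality, and integrate in $y$. The error term contributes $\ll(\sigma-\tfrac 12)\log T$, and the Riemann--von Mangoldt estimate bounds the number of zeros with $|\gamma-t|\le 2$ by $O(\log T)$. It therefore suffices to establish the per-zero bound
$$
\int_{t-1}^{t+1}\left|\log\frac{(\sigma-\beta)^2+(y-\gamma)^2}{(\tfrac 12-\beta)^2+(y-\gamma)^2}\right|dy\ll\sigma-\tfrac 12,
$$
uniformly in the zero $\rho=\beta+i\gamma$. After substituting $u=y-\gamma$, the range of integration lies inside $[-3,3]$. Writing $a=|\sigma-\beta|$ and $b=|\tfrac 12-\beta|$, the integrand depends only on $a,b$, has constant sign in $u$, and equals $\int_{\min(a,b)}^{\max(a,b)}\tfrac{2c}{c^2+u^2}\,dc$. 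Fubini then converts the $y$-integral into
$$
\int_{-3}^{3}\left|\log\frac{a^2+u^2}{b^2+u^2}\right|du = \int_{\min(a,b)}^{\max(a,b)}4\arctan(3/c)\,dc \le 2\pi\bigl(\max(a,b)-\min(a,b)\bigr)\le 2\pi(\sigma-\tfrac 12),
$$
using $\arctan(3/c)\le\pi/2$ and the reverse triangle inequality $\max(a,b)-\min(a,b)\le|(\sigma-\beta)-(\tfrac 12-\beta)|=\sigma-\tfrac 12$. Summing over the $O(\log T)$ near zeros then gives the proposition.

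The step I expect to be most delicate is arranging matters so that no spurious factor of $\log(1/(\sigma-\tfrac 12))$ appears. A naive route that bounds $|\zeta'/\zeta(\alpha+iy)|$ pointwise before the $\alpha$-integration fails: the $y$-integral of $1/|\alpha+iy-\rho|$ is essentially $\log(1/|\alpha-\beta|)$, whose subsequent $\alpha$-integration near $\alpha=\beta$ generates the unwanted logarithm. Integrating in $\alpha$ first replaces inverse distances by logarithms of ratios, and then the uniform bound $\arctan(3/c)\le\pi/2$ disposes of what would otherwise be a divergent contribution from zeros with $\beta$ close to either $\tfrac 12$ or $\sigma$.
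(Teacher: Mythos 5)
Your proof is correct and reaches the same endgame as the paper's: a sum over the $O(\log T)$ zeros with $|\gamma-t|\le 2$ of a per-zero integral $\int\bigl|\log\bigl|\cdot\bigr|\bigr|\,dy\ll \sigma-\tfrac12$. The routes to that point differ slightly. The paper applies Hadamard's factorization to $\xi(s)$ directly, so its sum runs over \emph{all} zeros: the distant ones ($|t-\gamma|\ge 2$) must then be handled separately via the decay $\ll(\sigma-\tfrac12)/(t-\gamma)^2$, and the Gamma factor is controlled by Stirling. You instead integrate $\operatorname{Re}\zeta'/\zeta$ horizontally and invoke the standard truncated partial-fraction expansion, which packages the distant zeros and the Gamma factor into a single $O(\log T)$ error that integrates to the acceptable $O((\sigma-\tfrac12)\log T)$; this is a clean shortcut, at the cost of quoting a lemma that is itself proved from Hadamard's formula. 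Your per-zero bound (Fubini plus $\arctan\le\pi/2$) is essentially the paper's exact evaluation of $\tfrac12\int_{-\infty}^{\infty}\bigl|\log\tfrac{(\beta-1/2)^2+x^2}{(\beta-\sigma)^2+x^2}\bigr|\,dx=\pi(\sigma-\tfrac12)$, truncated to $[-3,3]$, and your closing remark correctly identifies why integrating in $\alpha$ before $y$ avoids a spurious $\log(1/(\sigma-\tfrac12))$. One immaterial caveat: the partial-fraction formula you quote is uniform only for $\sigma$ in a bounded range (say $\sigma\le 2$), so for the proposition as literally stated (``any $\sigma>1/2$'') you would add the trivial observation that $\zeta'/\zeta(\alpha+iy)=O(1)$ for $\alpha\ge 2$; the intended application has $\sigma=\sigma_0$ adjacent to $\tfrac12$, where this does not arise.
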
  
The proof of Proposition \ref{Prop1} is the only place where we will briefly need to mention the zeros of $\zeta(s)$. 
From now on, we set $\sigma_0 = \tfrac 12+ \frac{W}{\log T}$, for a suitable parameter $W\ge 3$ to be chosen later.  
From Proposition \ref{Prop1}, and in view of the Theorem \ref{mainthm} that we set out to prove, 
we see that if $W = o(\sqrt{\log \log T})$ then we may typically approximate 
$\log |\zeta(\tfrac 12+it)|$ by $\log |\zeta(\sigma_0+it)|$. Thus we may from now on focus on the distribution of $\log |\zeta(\sigma_0+it)|$. 

There is considerable latitude in choosing parameters such as $W$, but to fix ideas we select 
\begin{equation} 
\label{1.0} 
W= (\log \log \log T)^4, \ \  X = T^{1/(\log \log \log T)^2}, \\ \text{ and } Y = T^{1/(\log \log T)^2}. 
\end{equation} 
Here $X$ and $Y$ are two parameters that will appear shortly.
Put 
\begin{equation} 
\label{1.1} 
{\mathcal P}(s) = {\mathcal P}(s;X) = \sum_{2\le n\le X} \frac{\Lambda(n)}{n^s \log n}. 
\end{equation} 
By computing moments, it is not hard to determine the distribution of ${\mathcal P}(s)$.  

\begin{proposition}  \label{Prop2}   As $t$ varies in $T\le t\le 2T$, the distribution of $\text{Re} ({\mathcal P}(\sigma_0+it))$ is approximately 
normal with mean $0$ and variance $\sim \frac 12 \log \log T$.  
\end{proposition}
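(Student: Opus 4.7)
The plan is to apply the method of moments: I show that for every fixed integer $k \ge 0$,
$$
\frac{1}{T}\int_T^{2T}\bigl(\mathrm{Re}\,\mathcal{P}(\sigma_0+it)\bigr)^k\,dt
$$
is asymptotic to the $k$-th moment of a mean-zero Gaussian with variance $\tfrac12 \log \log T$. Since the Gaussian is determined by its moments, this yields the distributional statement. The essential tool is the orthogonality estimate
$$
\frac{1}{T}\int_T^{2T} n^{it}\,dt \;\ll\; \min\!\Bigl(1,\,\tfrac{1}{T\log n}\Bigr)\quad (n \ge 2),
$$
which makes $\{n^{it}\}$ behave like an orthonormal family on $[T,2T]$ for $n \le T^{1-\varepsilon}$. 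Any product of a fixed number of integers from $[2,X]$ has size at most $X^k = T^{o(1)}$, well below $T$, so the estimate applies to every Dirichlet polynomial that arises when a fixed moment is expanded.

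First I split $\mathcal{P} = P + R$, where $P(s) = \sum_{p\le X} p^{-s}$ is the prime contribution and $R$ collects the prime powers $p^j$ with $j \ge 2$. A direct diagonal computation gives $\frac{1}{T}\int_T^{2T}|R(\sigma_0+it)|^{2m}\,dt = O_m(1)$ for every fixed $m$, because the relevant series $\sum_p p^{-2j\sigma_0}$ converge absolutely once $j\ge 2$. Hence $R$ cannot affect the leading order of any fixed moment, and it suffices to analyse $\mathrm{Re}\,P$. For the variance, expand $4(\mathrm{Re}\,P)^2 = P^2 + 2 P\overline{P} + \overline{P}^2$: orthogonality kills $P^2$ and $\overline{P}^2$ (the diagonal would require two primes to multiply to $1$), while
$$
\frac{1}{T}\int_T^{2T}|P(\sigma_0+it)|^2\,dt \;=\; \sum_{p \le X}\frac{1}{p^{2\sigma_0}} + O(X^2/T) \;\sim\; \log\log T,
$$
giving variance $\sim \tfrac12 \log\log T$ as required.

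For a general $k$-th moment, expand $(2\,\mathrm{Re}\,P)^k = \sum_{j=0}^k\binom{k}{j}P^j \overline{P}^{\,k-j}$ and integrate term by term. Each term reduces to a weighted count of solutions $p_1 \cdots p_j = q_1 \cdots q_{k-j}$ with $p_i, q_l \le X$ prime. Unique factorization forces the multisets $\{p_i\}$ and $\{q_l\}$ to coincide, and in particular to have the same size, so only the terms with $j = k-j$ contribute on the diagonal. For odd $k$ no such $j$ exists, leaving only off-diagonal error and a moment of size $o((\log\log T)^{k/2})$. For $k = 2m$, the $j = m$ diagonal splits by the multiplicity pattern of the common multiset: the all-distinct pattern contributes the pure pairing count $m!\bigl(\sum_p p^{-2\sigma_0}\bigr)^m$, which when combined with the binomial coefficient $\binom{2m}{m}/4^m$ yields exactly the Gaussian moment $(2m-1)!!\,(\tfrac12\log\log T)^m$, while any pattern with a repeated prime loses at least one factor of $\sum_p p^{-2\sigma_0} \sim \log\log T$ and is strictly lower order.

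The main technical obstacle is the uniform combinatorial bookkeeping: verifying that the repeated-prime configurations indeed lose the expected powers of $\log\log T$, and that the accumulated off-diagonal error of size $O(X^k/T) = O(T^{o(1)-1})$ stays negligible for each fixed $k$. Both issues are routine consequences of $X = T^{o(1)}$ together with $\sum_{p\le X} p^{-2\sigma_0} \sim \log\log T$, so once the setup is in place the argument proceeds without difficulty.
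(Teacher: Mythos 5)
Your proposal is correct and follows essentially the same route as the paper: discard the prime powers by a mean-value estimate, then compute the moments of the remaining prime sum by extracting the diagonal $p_1\cdots p_j = q_1\cdots q_{k-j}$ (forcing $j=k/2$), with the distinct-prime configurations producing the Gaussian moments and repeated primes losing a factor of $\log\log T$. The only cosmetic difference is that you remove the prime-square part $R$ by bounding all of its moments and invoking H\"older, whereas the paper bounds only its second moment and excludes a set of small measure; both are routine.
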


Our goal is now to connect Re(${\mathcal P}(\sigma_0+it)$) with $\log |\zeta(\sigma_0+it)|$ for most values of $t$.  
This is done in two stages.  First we introduce a Dirichlet polynomial $M(s)$ which we show is typically close to 
$\exp(-{\mathcal P}(s))$.  Define $a(n) =1$ if $n$ is composed of only primes below $X$, and it has at most $100 \log \log T$ primes 
below $Y$, and at most $100\log \log \log T$ primes between $Y$ and $X$; set $a(n)=0$ in all other cases.   Put 
\begin{equation} 
\label{1.2} 
M(s) = \sum_{n} \frac{\mu(n) a(n)}{n^s}. 
\end{equation} 
Note that $a(n)=0$ unless $n\le Y^{100\log \log T} X^{100\log \log \log T} <T^{\epsilon}$, and so $M(s)$ is a short Dirichlet 
polynomial.

\begin{proposition}  \label{Prop3} With notations as above, we have for $T\le t\le 2T$ 
$$
M(\sigma_0+it) = (1+o(1))  \exp(-\mathcal{P}(\sigma_0+it)),  
$$  
except perhaps on a subset of measure $o(T)$.  
\end{proposition}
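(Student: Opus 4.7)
The plan is to split $\mathcal{P}=\mathcal{A}+\mathcal{B}$ with $\mathcal{A}(s):=\sum_{n\le Y}\Lambda(n)/(n^s\log n)$ and $\mathcal{B}(s):=\sum_{Y<n\le X}\Lambda(n)/(n^s\log n)$, so that $\exp(-\mathcal{P})=\exp(-\mathcal{A})\exp(-\mathcal{B})$, and to approximate each factor by its Taylor truncation
\[
N_1(s):=\sum_{k=0}^{100\log\log T}\frac{(-\mathcal{A}(s))^k}{k!},\qquad N_2(s):=\sum_{k=0}^{100\log\log\log T}\frac{(-\mathcal{B}(s))^k}{k!}.
\]
The truncation degrees $100\log\log T$ and $100\log\log\log T$ appearing in the definition of $a(n)$ are chosen exactly so that $N_1N_2$ will agree with $M$ up to a small remainder.

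First I would compute high even moments of $\mathcal{A}(\sigma_0+it)$ and $\mathcal{B}(\sigma_0+it)$ on $t\in[T,2T]$, using the near-orthogonality $\int_T^{2T}(m/n)^{it}\,dt=T\,\mathbf{1}[m=n]+O(T^{o(1)})$ (applicable since these polynomials have length $\le X=T^{o(1)}$). Combined with the prime sums $\sum_{p\le Y}p^{-1}\sim\log\log T$ and $\sum_{Y<p\le X}p^{-1}\sim 2\log\log\log T$, and Chebyshev's inequality at moment order of size $\sqrt{\log\log T}$, this yields
\[
|\mathcal{A}(\sigma_0+it)|\le(\log\log T)^{3/4},\qquad |\mathcal{B}(\sigma_0+it)|\le(\log\log\log T)^{3/4}
\]
outside an exceptional set of measure $o(T)$. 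Since these bounds are far smaller than the corresponding truncation degrees, Stirling gives the tail estimate $|\exp(-\mathcal{A})-N_1|\le(\log\log T)^{-A}$ (for any fixed $A$) pointwise on the good set, and similarly for $N_2$; multiplying, $\exp(-\mathcal{P}(\sigma_0+it))=N_1(\sigma_0+it)N_2(\sigma_0+it)(1+o(1))$ there.

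The remaining task is to compare $N_1N_2$ with $M$ as Dirichlet polynomials. Expanding $N_1$: for squarefree $n$ with $\omega(n)=k\le 100\log\log T$ and all prime factors $\le Y$, only the $k$-th Taylor term contributes and the $k!$ orderings of the primes cancel the $1/k!$, yielding coefficient exactly $\mu(n)$; the analogous statement holds for $N_2$. Since the supports of $N_1$ and $N_2$ are automatically coprime, the squarefree part of $N_1N_2$ is precisely $M$, and the remainder $N_1N_2-M$ is supported on non-squarefree integers with prime factors $\le X$ (arising from higher prime-power terms in $\mathcal{A},\mathcal{B}$ and from repeated primes in the Taylor expansions). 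An $L^2$ estimate via Montgomery--Vaughan (the polynomials have length $T^{o(1)}$), with Rankin's trick converting sums over non-squarefree integers into controllable Euler products, gives $\int_T^{2T}|N_1N_2-M|^2\,dt=o(T)$. The main obstacle is converting this additive $L^2$ control into the \emph{multiplicative} statement $M=(1+o(1))\exp(-\mathcal{P})$: this requires the pointwise lower bound $|\exp(-\mathcal{P}(\sigma_0+it))|\ge\exp(-2(\log\log T)^{3/4})$ on the good set, which is immediate from the moment bounds on $|\mathcal{A}|$ and $|\mathcal{B}|$, and after which a final application of Chebyshev to the $L^2$ remainder bound makes the corrections pointwise negligible relative to $|\exp(-\mathcal{P})|$ off a further set of measure $o(T)$.
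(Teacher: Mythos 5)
Your first two steps are exactly the paper's Lemma 2: the same splitting of $\mathcal{P}$ at $Y$, the same truncation degrees $100\log\log T$ and $100\log\log\log T$, second-moment/Chebyshev control of $|\mathcal{A}|$ and $|\mathcal{B}|$ off a set of measure $o(T)$, and the Taylor-tail estimate. Where you diverge is the endgame: the paper compares $M_1$ with its truncated exponential and $M_2$ with its truncated exponential \emph{separately} in $L^2$, converting each additive error into a multiplicative one against the weak lower bounds $|\exp(-\mathcal{P}_1)|\gg(\log T)^{-1}$ and $|\exp(-\mathcal{P}_2)|\gg(\log\log T)^{-1}$, and only multiplies the two asymptotics at the very end. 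You instead compare the full product $N_1N_2$ with $M$ in $L^2$ and divide by $\exp(-\mathcal{P})$ once. As written this has two genuine gaps.

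First, the bound $\int_T^{2T}|N_1N_2-M|^2\,dt=o(T)$ does not follow from ``supported on non-squarefree integers plus Rankin.'' Non-squarefree $Y$-smooth integers are plentiful ($\sum 1/n\asymp\log Y$), so non-squarefreeness alone gives nothing; and writing $N_1N_2-M=(N_1-M_1)N_2+M_1(N_2-M_2)$, the diagonal of the second piece factors (by coprimality of supports) as roughly $\bigl(\sum_n a_1(n)n^{-2\sigma_0}\bigr)\cdot\bigl(\sum_n|c_2(n)|^2n^{-2\sigma_0}\bigr)$, where the first factor is $\asymp\log Y$. The Rankin bound on integers with more than $100\log\log\log T$ prime factors gives only $(\log\log T)^{-60}$ for the second factor, and $\log Y\cdot(\log\log T)^{-60}\to\infty$: the naive combination of the two Rankin estimates is not even $o(T)$. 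To close this you must use a different mechanism, namely that every integer in the support of $N_2-M_2$ is divisible by a prime power $p^a>Y$ with $a\ge2$, giving the much stronger bound $\ll Y^{-1/2+o(1)}$; your sketch does not isolate this. Second, even granting $o(T)$, the final Chebyshev step is invalid: on your good set $|\exp(-\mathcal{P}(\sigma_0+it))|$ may be as small as $\exp(-2(\log\log T)^{3/4})$, so to make the additive error negligible relative to it off a set of measure $o(T)$ you need $\int|N_1N_2-M|^2\,dt=o\bigl(T\exp(-4(\log\log T)^{3/4})\bigr)$, which an unquantified ``$o(T)$'' does not provide. Both gaps are repairable --- the true bound is $\ll T(\log T)^{-c}$, dominated by the $(N_1-M_1)N_2$ term, and that does beat $\exp(-4(\log\log T)^{3/4})$ --- but the paper's factor-by-factor comparison exists precisely to avoid this bookkeeping: there the error $(\log\log T)^{-25}$ in the second factor only has to beat the lower bound $(\log\log T)^{-1}$ for $|\exp(-\mathcal{P}_2)|$ alone.
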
 

The final step of the proof shows that $\zeta(\sigma_0+it) M(\sigma_0+it)$ is typically close to $1$.  

\begin{proposition} \label{Prop4} 
With notations as above, 
$$ 
\int_{T}^{2T} |1-\zeta(\sigma_0+it) M(\sigma_0+it)|^2  dt = o(1), 
$$ 
so that for $T\le t\le 2T$ we have 
$$ 
\zeta(\sigma_0+it) M(\sigma_0+it) = 1+o (1), 
$$ 
except perhaps on a set of measure $o(T)$.    
\end{proposition}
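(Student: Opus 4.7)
The plan is to open the square and estimate $\int_T^{2T}|1-\zeta(\sigma_0+it) M(\sigma_0+it)|^2\,dt$ via the mean-value theorem for Dirichlet polynomials, exploiting a combinatorial vanishing which makes $\zeta M$ close to $1$ on the Dirichlet-series side. First, I would approximate $\zeta(\sigma_0+it)$ on $[T,2T]$ by a Dirichlet polynomial of length $T^{1+o(1)}$---for instance via a smoothed Perron truncation or the approximate functional equation on the line $\sigma_0>1/2$---with a negligible $L^2$ remainder. Multiplying by $M(s)=\sum_m \mu(m)a(m)/m^s$ reduces the problem to bounding the mean square of
\[
\sum_{n>1}\frac{b(n)}{n^{\sigma_0+it}}, \qquad b(n)=\sum_{d\mid n}\mu(d)\,a(d),
\]
a Dirichlet polynomial of total length $N=T^{1+o(1)}$.

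The pivotal arithmetic input is that $b(n)$ vanishes for \emph{typical} $n>1$. Since $a(d)=0$ unless $d$ is squarefree with the structure described before (\ref{1.2}), $b(n)$ depends only on $r=\mathrm{rad}(n)$. If $r$ is $X$-smooth and has at most $100\log\log T$ primes $\le Y$ and at most $100\log\log\log T$ primes in $(Y,X]$, then every squarefree divisor $d\mid r$ satisfies $a(d)=1$, and so $b(n)=\sum_{d\mid r}\mu(d)=0$ for $n>1$. Thus $b(n)\neq 0$ only when $n$ is \emph{anomalous}, i.e., one of the following holds: (a) $n$ has a prime factor $>X$; (b) $n$ has more than $100\log\log T$ distinct prime factors $\le Y$; or (c) $n$ has more than $100\log\log\log T$ distinct prime factors in $(Y,X]$.

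By the mean-value theorem for Dirichlet polynomials, the task reduces to showing that $\sum_{n>1,\,\text{anomalous}}|b(n)|^2/n^{2\sigma_0}$ is sufficiently small. The contribution of class (a) is controlled by $\sum_{p>X}p^{-2\sigma_0}\ll\exp(-c(\log\log\log T)^2)$, which follows from $2\sigma_0-1=2W/\log T$ and $\log X=\log T/(\log\log\log T)^2$, together with an Euler-product estimate for the remaining factors. For (b) and (c), I would apply Rankin's trick: insert a multiplicative weight $\alpha^{\omega_{\le Y}(n)}$ (respectively $\alpha^{\omega_{(Y,X]}(n)}$) with $\alpha>1$, factor as an Euler product over the relevant prime range, and optimize $\alpha$. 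Since the thresholds $100\log\log T$ and $100\log\log\log T$ vastly exceed the respective means $\log\log Y$ and $\log\log X-\log\log Y$, Rankin yields exponential savings in each case.

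The main technical obstacle will be handling $|b(n)|^2$ inside Rankin's weighted sums: $|b(n)|$ can be as large as $2^{\omega(\mathrm{rad}(n))}$, and this entropic factor must be absorbed into the Euler products while still preserving the gain from the threshold. The generous constants $100$ in the definition of $a(n)$, combined with the calibration of $W$, $X$, $Y$ in (\ref{1.0}), are tailored precisely so that in each of the three anomalous classes the saving overwhelms these losses, yielding a mean-square bound of the required order.
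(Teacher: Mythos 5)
There is a genuine gap, and it is at the pivotal step ``by the mean-value theorem for Dirichlet polynomials, the task reduces to showing that $\sum_{n>1,\,\mathrm{anomalous}}|b(n)|^2/n^{2\sigma_0}$ is sufficiently small.'' Your diagonal analysis (the vanishing of $\sum_{d\mid \mathrm{rad}(n)}\mu(d)a(d)$ for typical $n$, plus Rankin's trick for the classes with too many prime factors) is sound and in fact mirrors how the paper evaluates its \emph{first} main term. But the reduction to the diagonal is invalid here. Your Dirichlet polynomial for $\zeta M$ has length $T^{1+o(1)}$ over an interval of length $T$, so the off-diagonal terms are not an error. Concretely, already for the unmollified integral one has
$$
\int_T^{2T}|\zeta(\sigma_0+it)|^2\,dt \;=\; \zeta(2\sigma_0)\,T \;+\; \zeta(2-2\sigma_0)\int_T^{2T}\Big(\frac{t}{2\pi}\Big)^{1-2\sigma_0}dt \;+\;\text{(small)},
$$
and with $\sigma_0=\tfrac12+W/\log T$, $W=(\log\log\log T)^4$, the second (off-diagonal) term has size $\asymp T e^{-2W}\log T/W=T(\log T)^{1-o(1)}\gg T$. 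The same off-diagonal main term appears for each pair $(h,k)$ in the expansion of $\int|\zeta M|^2$, and it becomes $o(T)$ only after the sum $\sum_{h,k}\mu(h)\mu(k)a(h)a(k)(h,k)^{2-2\sigma_0}/(hk)$ produces the near-total cancellation $\prod_{p\le X}(1-2/p+p^{-2\sigma_0})\approx(\log X)^{-2}$, which is strong enough to beat the factor $\zeta(2-2\sigma_0)\asymp\log T$. This cancellation is the heart of the matter and is completely invisible in a ``diagonal plus mean-value error'' framework; the Montgomery--Vaughan error $\sum_n n|b(n)|^2n^{-2\sigma_0}$ is in fact $\gg T^{1+\epsilon/2}$ because of the coefficients with $n>T$. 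The paper supplies the missing ingredient as Lemma~\ref{lemma4}: a twisted second moment $\int_T^{2T}(h/k)^{it}|\zeta(\sigma+it)|^2dt$ with \emph{both} main terms, proved via the functional equation, after which the two main terms are summed against the mollifier coefficients separately.

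A secondary issue: truncating $\zeta$ sharply at length $T$ destroys the identity $b(n)=\sum_{d\mid n}\mu(d)a(d)$ for $T<n\le T^{1+o(1)}$ (the divisor sum becomes incomplete), and these boundary coefficients are typically nonzero; their diagonal contribution is already not obviously $o(T)$ since $n^{-2\sigma_0}\approx n^{-1}e^{-2W}$ with $e^{-2W}=(\log T)^{-o(1)}$. The paper sidesteps this by never truncating: it works with the smoothed contour representation $|\zeta(s)|^2=|G(s)|^{-2}(I(s)+I(1-s))$ in the region of absolute convergence. Also note the target should be read as $o(T)$, not $o(1)$; the Chebyshev step at the end only needs $o(T)$.
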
 

\begin{proof}[Proof of Theorem \ref{mainthm}]  To recapitulate the argument, Proposition \ref{Prop4} shows that typically $\zeta(\sigma_0+it) \approx M(\sigma_0+it)^{-1}$, 
which by Proposition \ref{Prop3} is $\approx \exp({\mathcal P}(\sigma_0+it))$, and therefore by Proposition \ref{Prop2} we 
may conclude that $\log |\zeta(\sigma_0+it)|$ is normally distributed.  Finally by Proposition \ref{Prop1} we deduce from this 
the normal distribution of $\log |\zeta(\tfrac 12+it)|$.   This completes the proof of Theorem \ref{mainthm}. 
\end{proof}

After developing the proofs of the propositions, in Section 7 we compare and contrast our approach with previous 
proofs, and also discuss possible extensions of this technique.  

\section{Proof of Proposition \ref{Prop1}} 

\noindent Put $G(s) =s(s-1)\pi^{-s/2} \Gamma(s/2)$ and $\xi(s)= G(s)\zeta(s)$ denote the completed $\zeta$-function.  
If $t$ is large and $t-1\le y\le t+1$, then by Stirling's formula $|\log G(\sigma+iy)/G(1/2+iy)| \ll (\sigma -1/2)\log t$, and 
so it is enough to prove that 
$$ 
\int_{t-1}^{t+1} \Big| \log \Big|\frac{\xi(\tfrac 12+iy)}{\xi(\sigma+iy)} \Big|\Big| dy \ll (\sigma -\tfrac 12) \log T. 
$$ 

Recall Hadamard's factorization formula 
$$ 
\xi(s) = e^{A+Bs} \prod_{\rho} \Big(1-\frac s{\rho}\Big) e^{s/\rho}, 
$$ 
where $A$ and $B$ are constants with $B= -\sum_{\rho} \text{Re }(1/\rho)$.  
Thus (assuming that $y$ is not the ordinate of a zero of $\zeta(s)$) 
$$ 
\log \Big| \frac{\xi(\tfrac 12+iy)}{\xi(\sigma+iy)}\Big| = \sum_{\rho} \log \Big| \frac{\tfrac 12 +iy-\rho}{\sigma+iy -\rho}\Big|.
$$ 
  Integrating the above over $y \in (t-1,t+1)$ we 
get 
\begin{equation} 
\label{2.1}
\int_{t-1}^{t+1} \Big| \log \Big|\frac{\xi(\tfrac 12+iy)}{\xi(\sigma+iy)}\Big| \Big| dy 
\le \sum_{\rho} \int_{t-1}^{t+1} \Big| \log \Big| \frac{\tfrac 12+iy-\rho}{\sigma+iy-\rho}\Big| \Big|dy. 
\end{equation} 

Suppose $\rho=\beta+i\gamma$ is a zero of $\zeta(s)$.  If $|t-\gamma| \ge 2$ then we check readily that 
$$ 
\int_{t-1}^{t+1} \Big| \log \Big| \frac{\tfrac 12+iy-\rho}{\sigma+iy-\rho}\Big| \Big| dy \ll \frac{(\sigma- \tfrac 12)}{(t-\gamma)^2}. 
$$ 
In the range $|t-\gamma|\le 2$ we use 
$$ 
  \int_{t-1}^{t+1} \Big| \log  \Big|\frac{\tfrac 12+iy-\rho}{\sigma+iy-\rho}\Big|\Big| dy \le \frac{1}{2} \int_{-\infty}^{\infty} \Big| \log \frac{(\beta-\tfrac 12)^2+x^2}{(\beta-\sigma)^2+x^2}\Big| dx = \pi (\sigma-\tfrac 12).
$$ 
Thus in either case 
$$ 
\int_{t-1}^{t+1} \Big| \log \Big| \frac{\tfrac 12+iy-\rho}{\sigma+iy-\rho}\Big| \Big| dy \ll \frac{(\sigma -\tfrac 12)}{1+(t-\gamma)^2}.
$$ 
Inserting this in \eqref{2.1}, and noting that there are $\ll \log (t +k)$ zeros with $k \le |t-\gamma| <k+1$,  the proposition follows.
  
\section{Proof of Proposition \ref{Prop2}} 

\noindent We begin by showing that we may restrict the sum in ${\mathcal P}(s)$ just to primes.  The contribution of cubes and higher powers of primes is clearly $O(1)$, and we need only discard the contribution of squares of primes.  By integrating out, it is easy to see that 
$$ 
\int_T^{2T} \Big| \sum_{p\le \sqrt{X}} \frac{1}{2p^{2(\sigma_0+it)} } \Big|^2 dt \ll \sum_{p_1, p_2 \le \sqrt{X}} \min\Big(T, \frac{1}{|\log (p_1/p_2)|} \Big) \ll T. 
$$ 
Therefore, the measure of the set $t\in [T,2T]$ with the contribution of prime squares being larger than $L$ (say) is at most 
$\ll T/L^2$.  In view of this, to establish Proposition \ref{Prop2}, it is enough  to prove that
$$
\mathcal{P}_0(\sigma_0 + it) := \text{Re } \sum_{p \leq X} \frac{1}{p^{\sigma_0 + it}} 
$$
has an approximately Gaussian distribution with mean 0 and variance $\sim {\tfrac 12 \log\log T}$.   We establish this by computing moments.  

\begin{lemma} \label{lemma1} Suppose that $k$ and $\ell$ are non-negative integers with $X^{k+\ell} \le {T}$.   Then, if $k\neq \ell$, 
$$ 
\int_{T}^{2T} {\mathcal P}_0(\sigma_0+it)^{k} {\mathcal P}_0(\sigma_0-it)^{\ell} dt \ll T,  
$$  
while if $k=\ell$ we have 
$$ 
\int_{T}^{2T} |{\mathcal P}_0(\sigma_0+it)|^{2k} dt = k! T (\log \log T)^{k} +O_k(T(\log \log T)^{k-1+\epsilon}). 
$$ 
\end{lemma}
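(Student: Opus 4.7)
The plan is to expand $\mathcal{P}_0(\sigma_0+it)^k \mathcal{P}_0(\sigma_0-it)^\ell$ as a Dirichlet polynomial in $t$, integrate term by term over $[T,2T]$, and separate the diagonal from the off-diagonal contributions. Writing $\mathcal{P}_0(\sigma_0+it)^k = \sum_m r_k(m)\,m^{-\sigma_0-it}$ and $\mathcal{P}_0(\sigma_0-it)^\ell = \sum_n r_\ell(n)\,n^{-\sigma_0+it}$, where $r_j(m) = \#\{(p_1,\ldots,p_j):p_i\text{ prime},\ p_i\le X,\ p_1\cdots p_j = m\}$, the integral becomes
$$T\sum_{m=n}\frac{r_k(m)r_\ell(n)}{(mn)^{\sigma_0}} + O\Bigl(\sum_{m\ne n}\frac{r_k(m)r_\ell(n)}{(mn)^{\sigma_0}\,|\log(n/m)|}\Bigr),$$
using $\int_T^{2T}(n/m)^{it}\,dt = T$ when $m = n$ and $O(1/|\log(n/m)|)$ otherwise.

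For the diagonal: $m = n$ together with $\Omega(m) = k$ and $\Omega(n) = \ell$ forces $k = \ell$ by unique factorization, so no main term survives when $k \ne \ell$. When $k = \ell$, the diagonal equals $T\sum_m r_k(m)^2 m^{-2\sigma_0}$. I would extract the squarefree contribution (where $r_k(m) = k!$), which gives the leading term $k!(\sum_{p\le X}p^{-2\sigma_0})^k$, and control the remaining ``two $p_i$'s coincide'' terms by $O_k((\log\log T)^{k-1})$ using $\sum_p p^{-4\sigma_0} = O(1)$. A Mertens-type estimate will then give $\sum_{p\le X} p^{-2\sigma_0} = \log\log T + O(\log\log\log T)$, since both the truncation at $X = T^{1/(\log\log\log T)^2}$ and the shift $\sigma_0 = \tfrac{1}{2}+W/\log T$ off the critical line contribute only lower-order corrections; combining these, the diagonal becomes $k!\,T(\log\log T)^k + O_k(T(\log\log T)^{k-1+\epsilon})$.

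For the off-diagonal, the key elementary bound is that for distinct positive integers $m,n$, $|\log(n/m)| \ge \log(1 + 1/\min(m,n)) \ge 1/(2\min(m,n))$, and using $\min(m,n) \le \sqrt{mn}$ yields
$$\sum_{m\ne n}\frac{r_k(m) r_\ell(n)}{(mn)^{\sigma_0}\,|\log(n/m)|} \ll \sum_{m,n} r_k(m) r_\ell(n) (mn)^{1/2-\sigma_0} = \Bigl(\sum_{p\le X} p^{1/2-\sigma_0}\Bigr)^{k+\ell}.$$
Since $p^{1/2-\sigma_0} = p^{-W/\log T} \le 1$, the right-hand side is at most $\pi(X)^{k+\ell} \le X^{k+\ell} \le T$. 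For $k \ne \ell$ this is exactly the claimed bound; for $k = \ell$ it is comfortably absorbed into the error term.

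The main obstacle I anticipate is the Mertens-type evaluation of $\sum_{p\le X} p^{-2\sigma_0}$ with a sufficiently explicit error, together with the combinatorial bookkeeping showing that every secondary contribution (repeated-prime configurations, the truncation at $X$, and the shift off the critical line) actually falls inside the $O_k((\log\log T)^{k-1+\epsilon})$ budget. The off-diagonal estimate is by contrast surprisingly clean: the bound $\sqrt{mn}$ factorizes multiplicatively over primes, and the hypothesis $X^{k+\ell}\le T$ was evidently calibrated precisely so that $X^{k+\ell}$ fits the error budget in both cases.
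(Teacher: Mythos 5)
Your proposal is correct and follows essentially the same route as the paper: expand into a Dirichlet polynomial, separate diagonal from off-diagonal, bound the off-diagonal via $|\log(m/n)|\gg 1/\sqrt{mn}$ together with the hypothesis $X^{k+\ell}\le T$, note the diagonal is empty when $k\ne\ell$, and extract the squarefree main term $k!\bigl(\sum_{p\le X}p^{-2\sigma_0}\bigr)^k$ with the repeated-prime configurations absorbed into the error. The Mertens-type evaluation you flag as the remaining obstacle is routine (both the truncation at $X$ and the shift $2W/\log T$ perturb $\sum_{p\le X}p^{-1}$ by only $O(\log\log\log T)$), so everything lands inside the $O_k(T(\log\log T)^{k-1+\epsilon})$ budget exactly as you anticipate.
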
 
\begin{proof}  Write ${\mathcal P}(s)^k = \sum_{n} a_k(n)n^{-s}$, where $a_k(n) =0$ unless $n$ has the prime factorization  $n=p_1^{\alpha_1}\cdots p_{r}^{\alpha_r}$ where $p_1$, $\ldots$, $p_r$ are distinct primes below $X$, and $\alpha_1+\ldots+\alpha_r=k$, in which case $a_k(n) = k!/(\alpha_1! \cdots \alpha_r!)$.   Therefore, expanding out the integral, we obtain 
$$ 
\int_T^{2T} {\mathcal P}_0(\sigma_0+it)^k {\mathcal P}_0(\sigma_0-it)^{\ell} dt = T \sum_{n} \frac{a_k(n)a_\ell(n)}{n^{2\sigma_0} } 
+ O\Big( \sum_{m\neq n} \frac{a_k(m)a_{\ell}(n)}{(mn)^{\sigma_0}} \frac{1}{|\log (m/n)|} \Big). 
$$ 
If $m\neq n$, then $|\log (m/n)| \ge 1/\sqrt{mn}$ and so the off-diagonal terms above contribute $\ll \sum_{m \neq n} a_k(m)a_{\ell}(n) 
\ll X^{k+\ell}$.  Note that if $k\neq \ell$ then $a_k(n)a_\ell(n)$ is always zero, and the first statement of the lemma follows.  

It remains in the case $k=\ell$ to discuss the diagonal term $\sum_{n} a_k(n)^2/n^{2\sigma_0}$.  The terms with $n$ not being square-free are easily seen to contribute $O_k((\log \log T)^{k-2})$.  Finally the square-free terms $n$ give 
$$ 
k! \sum_{\substack{ p_1,\ldots, p_k \le X \\ p_j \text{ distinct }} } \frac{1}{(p_1 \cdots p_k)^{2\sigma_0}  } = k! \Big(\sum_{p\le X} \frac{1}{p^{2\sigma_0}} \Big)^k + O_k((\log \log T)^{k-1}), 
$$ 
and the lemma follows.
\end{proof} 

From Lemma \ref{lemma1} we see that if $X^{k} \le T$ then for odd $k$ 
$$ 
\int_T^{2T} (\text{Re } {\mathcal P}_0(\sigma_0+it)^{k} dt \ll T, 
$$ 
while if $k$ is even then 
$$ 
\frac 1T \int_{T}^{2T} (\text{Re } {\mathcal P}_0(\sigma_0+it)^k dt =  2^{-k} \binom{k}{k/2} (k/2)! (\log \log T)^{k/2} + O_k((\log \log T)^{k-1+\epsilon}). 
$$ 
These moments match the moments of a Gaussian random variable with mean zero and variance $\sim \tfrac 12\log \log T$, and since the Gaussian is determined by its moments, our proposition follows.

\section{Proof of Proposition \ref{Prop3}}  

\noindent Let us decompose ${\mathcal P}(s)$ as ${\mathcal P}_1(s)+{\mathcal P}_2(s)$, where  
$$
\mathcal{P}_1(s) = \sum_{2 \leq n \leq Y} \frac{\Lambda(n)}{n^s \log n}, \qquad  \text{and} \qquad \mathcal{P}_2(s) =
\sum_{Y < n \le X} \frac{\Lambda(n)}{n^{s} \log n}. 
$$
Put 
$$ 
{\mathcal M}_1(s) = \sum_{0\le k\le 100\log \log T} \frac{(-1)^k}{k!} {\mathcal P}_1(s)^k, \qquad 
\text{and} \qquad 
{\mathcal M}_2(s) = \sum_{0\le k\le 100\log \log \log T} \frac{(-1)^k}{k!} {\mathcal P}_2(s)^k. 
$$

\begin{lemma} \label{lemma2}  For $T\le t\le 2T$ we have 
\begin{equation} 
\label{3.1} 
|{\mathcal P}_1(\sigma_0+it)| \le \log \log T, \text{  and  }  |{\mathcal P}_2(\sigma_0+it)| \le \log \log \log T,
\end{equation} 
except perhaps for a set of measure $\ll T/\log \log \log T$.   When the bounds \eqref{3.1} hold, we have 
\begin{equation} 
\label{3.2} 
{\mathcal M}_1(\sigma_0+it) = \exp(-{\mathcal P}_1(\sigma_0+it)) \Big(1+O((\log T)^{-99})\Big),
\end{equation} 
and 
\begin{equation} 
\label{3.3} 
{\mathcal M}_2(\sigma_0+it) = \exp(-{\mathcal P}_2(\sigma_0+it)) \Big(1+O((\log \log T)^{-99})\Big).
\end{equation}
\end{lemma}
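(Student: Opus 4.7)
The plan is to establish Lemma \ref{lemma2} in two essentially independent steps: first obtain the concentration bounds \eqref{3.1} via a moment computation, then deduce the Taylor approximations \eqref{3.2} and \eqref{3.3} from a tail estimate for the exponential series. For \eqref{3.1}, the diagonal argument underlying Lemma \ref{lemma1} adapts directly to give, for each fixed positive integer $k$,
$$
\int_T^{2T} |\mathcal{P}_1(\sigma_0+it)|^{2k}\, dt \ll_k T (\log \log T)^k, \quad \int_T^{2T} |\mathcal{P}_2(\sigma_0+it)|^{2k}\, dt \ll_k T (\log \log \log T)^k,
$$
the first variance being $\sim \log \log Y \sim \log \log T$ and the second being $\sim \log \log X - \log \log Y \sim 2\log \log \log T$; the constraints $Y^{2k}\le T$ and $X^{2k} \le T$ hold for any fixed $k$ in view of \eqref{1.0}, and the higher prime-power contributions to $\mathcal{P}_j$ are $O(1)$ uniformly and therefore harmless. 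Markov's inequality applied with $k=2$ then bounds the exceptional measure on which \eqref{3.1} fails by $\ll T/(\log\log\log T)^{2}$, comfortably $\ll T/\log\log\log T$.

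For the Taylor tail, I would fix $t$ at which \eqref{3.1} holds and write
$$
\exp(-\mathcal{P}_j(\sigma_0+it)) - \mathcal{M}_j(\sigma_0+it) = \sum_{k > K_j} \frac{(-\mathcal{P}_j(\sigma_0+it))^k}{k!},
$$
where $K_1 = 100\log\log T$ and $K_2 = 100\log\log\log T$. Stirling's bound $k! \ge (k/e)^k$, combined with the fact that $k > K_j \ge 100\,|\mathcal{P}_j(\sigma_0+it)|$ throughout the tail, gives $|\mathcal{P}_j|^k/k! \le (e/100)^k$. Summing geometrically, the absolute error is $\ll (e/100)^{K_j}$, which equals $(\log T)^{-c}$ for $j=1$ and $(\log\log T)^{-c}$ for $j=2$, with $c = 100\log(100/e) > 100$ in both cases. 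Dividing by the crude lower bound $|\exp(-\mathcal{P}_j(\sigma_0+it))| \ge \exp(-|\mathcal{P}_j|) \ge 1/\log T$ (resp.\ $\ge 1/\log\log T$) produces the relative errors $O((\log T)^{-99})$ and $O((\log\log T)^{-99})$ claimed in \eqref{3.2} and \eqref{3.3}.

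The whole argument is essentially mechanical; the only point requiring attention is the choice of truncation lengths. The factors of $100$ in $K_1$ and $K_2$ are forced by the requirement that the absolute tail bound $(e/100)^{K_j}$ beat, by a further factor of $(\log T)^{-99}$ or $(\log\log T)^{-99}$ respectively, the worst-case value of $|\exp(-\mathcal{P}_j(\sigma_0+it))|$ consistent with \eqref{3.1}. The chosen constants leave ample room, so no part of the argument is genuinely delicate.
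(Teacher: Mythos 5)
Your proof is correct and follows essentially the same route as the paper: a mean-value (Chebyshev) bound to control the exceptional set in \eqref{3.1} --- the paper gets away with just the second moment where you invoke the fourth --- followed by the identical Stirling tail estimate for the truncated exponential series, divided through by the trivial lower bound on $|\exp(-\mathcal{P}_j)|$. One small inaccuracy worth fixing: the prime-square part of $\mathcal{P}_j$ is not $O(1)$ uniformly in $t$ (pointwise it can be as large as $\sim \tfrac12\log\log T$), only $O(1)$ in mean square; this is harmless here because the moment bounds you need hold with the $\Lambda(n)$-weights included, exactly as in the paper's direct computation.
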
 
\begin{proof}  Note that 
$$ 
\int_T^{2T} |{\mathcal P}_1(\sigma_0+it)|^2 dt \ll \sum_{2\le n_1, n_2 \le Y} \frac{\Lambda(n_1)\Lambda(n_2)}{(n_1n_2)^{\sigma_0} 
\log n_1 \log n_2} \min\Big( T, \frac{1}{|\log (n_1/n_2)|}  \Big) \ll T\log \log T,  
$$ 
and similarly 
$$ 
\int_T^{2T} |{\mathcal P}_2(\sigma_0+it)|^2 dt \ll T\log \log \log T. 
$$ 
The first assertion \eqref{3.1} follows.  
 
 If $|z|\le K$ then using Stirling's formula it is straightforward to check that 
 $$ 
 \Big| e^{z} - \sum_{0\le k\le 100K} \frac{z^k}{k!} \Big| \le e^{-99K},
 $$ 
 and therefore the estimates \eqref{3.2} and \eqref{3.3} hold.  
\end{proof}

Put $a_1(n) =1$ if $n$ is composed of at most $100\log \log T$ primes all below $Y$, and zero otherwise.  Put 
$a_2(n) =1$ if $n$ is composed of at most $100\log \log \log T$ primes all between $Y$ and $X$, and zero otherwise.  Then $M(s)=M_1(s)M_2(s)$ with 
$$ 
M_1(s) = \sum_n \frac{\mu(n)a_1(n)}{n^s} \qquad \text{and} \qquad
M_2(s) = \sum_n \frac{\mu(n) a_2(n)}{n^s}. 
$$  

\begin{lemma} \label{lemma3}  With notations as above, we have 
$$ 
\int_T^{2T} |{\mathcal M}_1(\sigma_0+it)  - M_1(\sigma_0+it)|^2 dt \ll  T(\log T)^{-60}, 
$$ 
and 
$$ 
\int_T^{2T} |{\mathcal M}_2(\sigma_0+it) - M_2(\sigma_0+it)|^2 dt \ll  T(\log \log T)^{-60}. 
$$ 
\end{lemma}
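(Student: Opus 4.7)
The plan is to apply the mean value theorem for Dirichlet polynomials. Both $\mathcal{M}_1$ and $M_1$ are Dirichlet polynomials supported on integers $n$ composed of primes at most $Y$ and having at most $K_1 := 100\log\log T$ prime factors counted with multiplicity, hence of length at most $Y^{K_1} = T^{100/\log\log T} = T^{o(1)}$. Writing the difference as $\mathcal{M}_1(s) - M_1(s) = \sum_n d_1(n) n^{-s}$, the mean value theorem yields
\[
\int_T^{2T} |\mathcal{M}_1(\sigma_0+it) - M_1(\sigma_0+it)|^2 \, dt \ll T \sum_n \frac{|d_1(n)|^2}{n^{2\sigma_0}},
\]
reducing the first claim to showing $\sum_n |d_1(n)|^2 n^{-2\sigma_0} \ll (\log T)^{-60}$.

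The key algebraic input is that $\mathcal{P}_1(s)^k = \sum_n c_k(n) n^{-s}$ has $c_k(n) = 0$ whenever $k > \Omega(n)$, since each of the $k$ factors in the expansion contributes at least one prime factor. Hence for any $n$ with $\Omega(n) \le K_1$ the Taylor truncation in $\mathcal{M}_1$ discards no nonzero terms, so $[n^{-s}]\mathcal{M}_1 = [n^{-s}]\exp(-\mathcal{P}_1)$. Using the factorisation
\[
\exp(-\mathcal{P}_1(s)) = \prod_{p \le Y}(1-p^{-s}) \cdot \exp(E(s)), \qquad E(s) := \sum_{\substack{p \le Y, m \ge 2 \\ p^m > Y}} \frac{1}{m p^{ms}},
\]
one checks that for squarefree $n$ with $\omega(n) \le K_1$ the coefficient equals $\mu(n) a_1(n)$, matching $M_1$; and for squarefree $n$ with $\omega(n) > K_1$ both coefficients vanish (since $c_k(n)$ on squarefree $n$ is supported on $k = \omega(n)$). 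Thus $d_1$ is supported on non-squarefree $n$, where $M_1$ contributes nothing and we must bound $\alpha(n) := [n^{-s}]\mathcal{M}_1(s)$.

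For non-squarefree $n$ with $\Omega(n) \le K_1$, $\alpha(n)$ is multiplicative with local factors $\alpha(p^a) = (-1)^a/a!$ at primes $p > \sqrt{Y}$ (and $\alpha(p^a) \approx 0$ for $a \ge 2$ at smaller $p$, where the local Euler factor of $\exp(E)$ is essentially $1$); a straightforward Euler product computation gives
\[
\sum_{\substack{n \text{ non-sqfr} \\ \Omega(n) \le K_1}} \frac{|\alpha(n)|^2}{n^{2\sigma_0}} \ll \Big(\prod_{p \le Y}(1+p^{-2\sigma_0})\Big) \sum_{p > \sqrt{Y}} p^{-4\sigma_0} \ll T^{-1/(3(\log\log T)^2)},
\]
which is negligible. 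For $\Omega(n) > K_1$ we bound $|\alpha(n)|^2 \le 2\,|[n^{-s}]\exp(-\mathcal{P}_1)|^2 + 2\,\big|\sum_{k > K_1}(-1)^k c_k(n)/k!\big|^2$; the first term gives the same negligible contribution, and Cauchy--Schwarz combined with the moment estimate $\sum_n |c_k(n)|^2/n^{2\sigma_0} \ll k!(\log\log T)^k$ (a multinomial computation analogous to Lemma~\ref{lemma1}, now for $\mathcal{P}_1$) bounds the tail by $\sum_{k > K_1} k^2(\log\log T)^k/k! \ll K_1^2 (e/100)^{K_1} \ll (\log T)^{-300}$ via Stirling. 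The argument for $\mathcal{M}_2 - M_2$ is entirely analogous with primes in $(Y, X]$ and $K_2 = 100\log\log\log T$, yielding a tail $(e/100)^{K_2} \ll (\log\log T)^{-300}$ that comfortably dominates the required $(\log\log T)^{-60}$. The main obstacle is the Dirichlet coefficient bookkeeping --- in particular verifying the exact cancellation on the squarefree locus through the $\prod(1-p^{-s})\exp(E)$ decomposition --- together with the Stirling-based tail estimate, which is where the choice $K_1 = 100\log\log T$ gets converted into the required super-polynomial saving.
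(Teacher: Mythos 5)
Your argument is correct, and while it shares the inevitable skeleton with the paper's proof (reduce via the mean value theorem to a diagonal sum $\sum_n |d_1(n)|^2 n^{-2\sigma_0}$, then exploit the factor $100$ in the truncation length through a large-deviation estimate), the execution of the key step is genuinely different. The paper observes that the Dirichlet coefficients $b(n)$ of $\mathcal{M}_1$ satisfy $|b(n)|\le 1$ and agree with $\mu(n)a_1(n)$ for small $\Omega(n)$, so that the difference is supported on $Y$-smooth $n$ with $\Omega(n)>100\log\log T$, and then applies Rankin's trick, bounding $\sum 1/n$ over that set by $r^{-100\log\log T}\prod_{p\le Y}(1-r/p)^{-1}$ with $r=e^{2/3}$. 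You instead locate the support of $d_1$ on the non-squarefree integers (via the factorization $\exp(-\mathcal{P}_1)=\prod_{p\le Y}(1-p^{-s})\exp(E)$), dispose of the $\Omega(n)\le K_1$ part through the $p^a>Y$ constraint, and control the $\Omega(n)>K_1$ part by Cauchy--Schwarz in $k$ over the tail of the exponential series together with the moment bound $\sum_n|c_k(n)|^2n^{-2\sigma_0}\ll k!\,(C\log\log T)^k$ and Stirling. The two tail estimates are morally the same Chernoff bound, but yours dovetails with the moment computation of Lemma \ref{lemma1} rather than with a Rankin weight $r^{\Omega(n)}$; it is somewhat heavier, but it has the merit of treating explicitly the prime-power coefficients that the paper's stated identity ``$b(n)=\mu(n)a_1(n)$ for $\Omega(n)\le 100\log\log T$'' quietly elides (that identity fails at, e.g., $n=p^2$ with $\sqrt{Y}<p\le Y$, which is exactly what your $\exp(E)$ factor captures). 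Two cosmetic points: your Euler-product bound for the non-squarefree terms with $\Omega(n)\le K_1$ should also account for higher powers $p^a>Y$ with $a\ge 3$ and $p\le\sqrt{Y}$ (they contribute the same negligible $Y^{-1+o(1)}$ amount), and for $\mathcal{M}_2$ the relevant variance is $\sum_{Y<p\le X}p^{-1}\sim 2\log\log\log T$, so the tail constant is $(2e/100)^{K_2}$ rather than $(e/100)^{K_2}$ --- both harmless.
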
  
\begin{proof}  We establish the first estimate, and the second follows similarly.  If we expand ${\mathcal M}_1(s)$ into a Dirichlet series $\sum_{n} b(n)n^{-s}$, then we may see that $|b(n)| \le 1$ for all $n$, $b(n) =0$ unless $n 
\le Y^{100\log \log T}$ is composed only of primes below $Y$, and $b(n) = \mu(n) a_1(n)$ if  $\Omega(n)\le 100\log \log T$.  (It is the presence of prime powers in ${\mathcal P}(s)$ that prevents ${\mathcal M}_1(s)$ from simply being $M_1(s)$.)  Thus, putting $c(n) = b(n) - \mu(n)a_1(n)$ temporarily, we see that 
$$ 
\int_T^{2T} |{\mathcal M}_1(\sigma_0+it) - M_1(\sigma_0 +it)|^2 dt \ll 
\sum_{n_1, n_2} \frac{|c(n_1)c(n_2)|}{(n_1n_2)^{\sigma_0}} \min \Big( T ,\frac{1}{|\log (n_1/n_2)|}\Big). 
$$ 
The terms with $n_1 \neq n_2$ contribute (since $|\log (n_1/n_2)| \gg 1/\sqrt{n_1n_2}$ in that case) 
$$ 
\ll \sum_{n_1 \neq n_2 \le Y^{100\log \log T}} 1 \ll T^{\epsilon}.  
$$ 
The diagonal terms $n_1=n_2$ contribute, for any $1<r<2$, 
$$ 
\ll T \sum_{\substack{ p| n \implies p\le Y \\ \Omega(n) >100\log \log T}} \frac{1}{n} \ll T r^{-100\log \log T} \prod_{p\le Y}\Big(1 +\frac{r}{p}+ \frac{r^2}{p^2}+\ldots \Big).  
$$ 
Choosing $r=e^{2/3}$, say, the above is $\ll T(\log T)^{-60}$.  
\end{proof} 

\begin{proof}[Proof of Proposition \ref{Prop3}]   From Lemma \ref{lemma3} it follows that except on a set of measure $o(T)$, one 
has $M_1(\sigma_0+it) = {\mathcal M}_1(\sigma_0+it) + O((\log T)^{-25})$.  Moreover, from \eqref{3.2}
 (except on a set of measure $o(T)$) we note that ${\mathcal M}_1(\sigma_0+it) = \exp(-{\mathcal P}_1(\sigma_0+it))(1+O((\log T)^{-99}))$, and by \eqref{3.1} that $ (\log T)^{-1} \ll |{\mathcal M}_1(\sigma_0+it) | \ll \log T$.  Therefore, we may conclude that, 
 except on a set of measure $o(T)$, 
 $$ 
 M_1(\sigma_0+it) = {\mathcal M}_1(\sigma_0+it) + O((\log T)^{-25}) = \exp(-{\mathcal P}_1(\sigma_0+it)) (1+O((\log T)^{-20})). 
 $$ 
Similarly, except on a set of measure $o(T)$, we have 
$$ 
M_2(\sigma_0+it) = {\mathcal M}_2(\sigma_0+it) + O((\log \log T)^{-25}) = \exp(-{\mathcal P}_2(\sigma_0+it)) (1+O((\log \log T)^{-20})).
$$ 
Multiplying these estimates we obtain 
$$ 
M(\sigma_0+it) = \exp(-{\mathcal P}(\sigma_0+it)) (1+O((\log \log T)^{-20})),
$$ 
completing our proof.  \end{proof}

%From Lemma \ref{lemma2} we know that except for $t$ in a set of measure $o(T)$, 
%we have $|{\mathcal P}_1(\sigma_0+it)| \le \log \log T$, and 

%It follows from  \ref{lemma3} that for almost all $T \leq t \leq 2T$, 
%\begin{align*}
%M_1(\sigma_0 + it) & = \mathcal{M}_1(\sigma_0 + it) + O((\log T)^{-59}) \ , \ \\
%M_2(\sigma_0 + it) & = \mathcal{M}_2(\sigma_0 + it) + O((\log\log T)^{-59}). 
%\end{align*}
%On the other hand, according to the first part of Lemma \ref{lemma2}, for almost all $T \leq t \leq 2T$
%we have $(\log T)^{-1} \leq |\mathcal{M}_1(\sigma_0 + it)| \leq \log T$ and
%$(\log\log T)^{-1} \leq |\mathcal{M}_2(\sigma_0 + it)| \leq \log\log T$. Therefore for almost all $T \leq t \leq 2T$,
%\begin{align*}
%M_1(\sigma + it) & = \mathcal{M}_1(\sigma_0 + it) \cdot (1 + O((\log T)^{-58})) \ , \ \\
%M_2(\sigma + it) & = \mathcal{M}_2(\sigma_0 + it) \cdot (1 + O((\log \log T)^{-58})). 
%\end{align*}
%Therefore, for almost all $T \leq t \leq 2T$,
%\begin{align*}
%M(\sigma_0 + it) & = M_1(\sigma_0 + it)M_2(\sigma + it) \\
%& = \mathcal{M}_1 (\sigma_0 + it) \mathcal{M}_2(\sigma_0 + it) \cdot (1 + O((\log\log T)^{-58})). 
%\end{align*}
%By the second part of Lemma \ref{lemma2}, for almost all $T \leq t \leq 2T$, this is
%$$
%\exp \Big (-(\mathcal{P}_1(s) + \mathcal{P}_2(s)) \Big )\cdot(1 + O((\log\log T)^{-58})). 
%$$
%And since 
%$\mathcal{P}(s) = \mathcal{P}_1(s) + \mathcal{P}_2(s)$
%the claim follows. 
%\end{proof}

\section{Proof of Proposition \ref{Prop4}} 
  
\noindent For $T\le t\le 2T$, one has $\zeta(\sigma_0+it) = \sum_{n\le T} n^{-\sigma_0-it} + O(T^{-\frac 12})$, and 
so 
$$ 
\int_T^{2T} \zeta(\sigma_0 +it) M(\sigma_0+it) dt = \sum_{n\le T} \sum_{m} \frac{a(m)\mu(m)}{(mn)^{\sigma} } 
\int_T^{2T} (mn)^{-it} dt + O(T^{\frac 12+\epsilon}) = T + O(T^{\frac 12+\epsilon}). 
$$ 
Therefore, expanding the square, we see that 
\begin{equation} 
\label{5.1} 
\int_{T}^{2T} |1-\zeta(\sigma_0 + it) M(\sigma_0 + it)|^2   dt =  
 \int_T^{2T} |\zeta(\sigma_0+it) M(\sigma_0+it)|^2 dt -T +O(T^{\frac 12 +\epsilon}). 
\end{equation}  
It remains to evaluate the integral above, and to do this we shall use the following familiar lemma (see for example Lemma 6 of Selberg \cite{Selberg2}).  For completeness 
we include a quick proof of the lemma in the next section, and we note that we give only a version sufficient for our purposes and not 
the sharpest known result.   

 \begin{lemma} \label{lemma4}  Let $h$ and $k$ be non-negative integers, with $h,k \le T$.  Then, for any $1\ge \sigma > \tfrac 12$, 
 \begin{align*}
\int_{T}^{2T}  \Big ( \frac{h}{k} \Big )^{it} |\zeta(\sigma + it )|^2 dt 
 &= \int_T^{2T} \Big( \zeta(2\sigma) \Big( \frac{(h,k)^2}{hk}\Big)^{\sigma}  + \Big(\frac{t}{2\pi}\Big)^{1-2\sigma} \zeta(2-2\sigma) \Big( \frac{(h,k)^2}{hk}\Big)^{1-\sigma}\Big) dt\\ 
 & + O(T^{1-\sigma+\epsilon} \min(h,k)).   
\end{align*}
\end{lemma}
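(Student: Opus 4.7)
The plan is to apply the approximate functional equation (AFE) for $\zeta$: for $T\le t\le 2T$ and $X=\sqrt{t/(2\pi)}$,
\[
\zeta(\sigma+it) = A(t) + \chi(\sigma+it)\, B(t) + O(T^{-\sigma/2}),
\]
where $A(t)=\sum_{m\le X}m^{-\sigma-it}$, $B(t)=\sum_{m\le X}m^{-(1-\sigma)+it}$, and $\chi(s)$ is the factor in $\zeta(s)=\chi(s)\zeta(1-s)$. Stirling's formula gives $|\chi(\sigma+it)|^{2}=(t/(2\pi))^{1-2\sigma}(1+O(1/t))$ and $\chi(\sigma+it)=(t/(2\pi))^{1/2-\sigma-it}e^{i(t+\pi/4)}(1+O(1/t))$. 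Combining this with its complex conjugate for $\zeta(\sigma-it)$, I would expand $(h/k)^{it}|\zeta(\sigma+it)|^{2}$ into four contributions: $(h/k)^{it}|A|^{2}$, $(h/k)^{it}|\chi|^{2}|B|^{2}$, and the two cross terms $(h/k)^{it}\chi B\bar A$ and its complex conjugate.

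For the $|A|^{2}$ contribution, the integral is $\sum_{m,n\le X}(mn)^{-\sigma}\int_T^{2T}(hn/(km))^{it}\,dt$. Writing $h=(h,k)h'$, $k=(h,k)k'$ with $(h',k')=1$, the diagonal condition $hn=km$ forces $m=h'j$, $n=k'j$ for some $j\ge 1$, and summing $1/(h'k'j^{2})^{\sigma}$ produces $\zeta(2\sigma)((h,k)^{2}/(hk))^{\sigma}T$, the first main term. The $|\chi|^{2}|B|^{2}$ piece is analogous: the diagonal $hm=kn$ yields $\zeta(2-2\sigma)((h,k)^{2}/(hk))^{1-\sigma}$, and the factor $|\chi(\sigma+it)|^{2}=(t/(2\pi))^{1-2\sigma}(1+O(1/t))$ integrates to give the second main term.

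For the off-diagonals I would use $\int_T^{2T}\alpha^{it}dt\ll 1/|\log\alpha|$ together with $|\log(hn/(km))|\gg |hn-km|/\max(hn,km)$; splitting according to whether $|hn-km|$ is larger or smaller than $km$, and counting lattice points $(m,n)$ with $hn-km$ fixed (a congruence condition modulo $(h,k)$), should give an error absorbable in $O(T^{1-\sigma+\varepsilon}\min(h,k))$. The two cross terms are genuinely oscillatory: after inserting the Stirling asymptotic for $\chi$, the integrand has phase $t\log(2\pi hmn/(kt))+t$ with stationary point $t=2\pi hmn/k$; for most pairs $(m,n)$ this lies outside $[T,2T]$ and integration by parts gives rapid decay, while the narrow range of $(m,n)$ yielding stationary points inside $[T,2T]$ is handled by standard stationary-phase bounds, contributing at most $O(T^{1-\sigma+\varepsilon}\min(h,k))$.

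The main obstacle will be the delicate off-diagonal bookkeeping required to extract the sharp $\min(h,k)$ dependence, especially in the regime where $|hn-km|$ is much smaller than $km$ (which concentrates $n$ near $km/h$ and contributes a logarithmic factor weighted by $\min(h,k)^{-1}$). The oscillatory analysis of the cross terms, while standard, also requires verifying that the AFE remainder and the Stirling error in $\chi$ integrate to something genuinely negligible, which is what motivates the symmetric choice $X=\sqrt{t/(2\pi)}$ for the AFE cutoff.
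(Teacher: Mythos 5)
Your route — the classical approximate functional equation with the symmetric cutoff $\sqrt{t/2\pi}$, diagonal extraction, and stationary phase for the cross terms — is genuinely different from the paper's. The paper avoids the AFE entirely: it sets $I(s)=\frac{1}{2\pi i}\int_{(c)}\xi(z+s)\xi(z+\overline{s})e^{z^2}\frac{dz}{z}$, uses $\xi(s)=\xi(1-s)$ to write $|\zeta(s)|^2=(I(s)+I(1-s))/|G(s)|^2$, and expands each $I$ as a Dirichlet series on a vertical line inside the region of absolute convergence. One then sees only a single family of terms indexed by $(m,n)$: the diagonal $hm=kn$ produces both main terms as residues at $z=\sigma$ and $z=1-\sigma$, and the off-diagonal is disposed of by the first-derivative test alone. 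The entire point of this formulation is that the oscillatory cross terms of the classical approach never arise.

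Those cross terms are where your proposal has a genuine gap. You correctly locate the stationary point of $\int(h/k)^{it}\chi(\sigma+it)B(t)\overline{A(t)}\,dt$ at $t=2\pi hmn/k$, but asserting that ``standard stationary-phase bounds'' yield $O(T^{1-\sigma+\epsilon}\min(h,k))$ is claiming the hardest estimate of the lemma without proof: one must count the pairs $(m,n)$ with $mn\asymp kT/h$ and both factors below the $t$-dependent cutoff $\sqrt{t/2\pi}$ (which itself complicates differentiating the phase), apply the second-derivative test to each, and check that the total with the weights $m^{\sigma-1}n^{-\sigma}$ really comes out to $T^{1-\sigma+\epsilon}\min(h,k)$ rather than something like $T^{1-\sigma/2}$; this is essentially the content of the classical twisted-second-moment literature. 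A second, concrete quantitative problem is the AFE remainder: the cross term of the $O(t^{-\sigma/2})$ error with $A+\chi B$ contributes, after Cauchy--Schwarz, about $T^{1-\sigma/2+\epsilon}$, which exceeds the claimed $T^{1-\sigma+\epsilon}\min(h,k)$ whenever $\min(h,k)$ is small (at $\sigma=\sigma_0$ this is $T^{3/4}$ versus $T^{1/2+\epsilon}$). Since the application in Proposition \ref{Prop4} sums over only $T^{o(1)}$ pairs $(h,k)$, an error of $T^{3/4+\epsilon}\min(h,k)$ would still suffice there, so your plan is salvageable for the paper's purposes; but as written it neither proves the lemma as stated nor actually carries out the cross-term analysis, and the paper's smoothed contour-integral device is precisely what makes both difficulties vanish.
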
  

Assuming Lemma \ref{lemma4}, we now complete the proof of Proposition \ref{Prop4}.   In view of \eqref{5.1} 
we must show that 
\begin{equation} 
\label{5.2} 
\sum_{h,k} \frac{\mu(h)\mu(k) a(h)a(k)}{(hk)^{\sigma_0}} \int_T^{2T} \Big(\frac{h}{k} \Big)^{it} |\zeta(\sigma_0+it)|^2 \sim T, 
\end{equation} 
and to do this we appeal to Lemma \ref{lemma4}.  The error terms are easily seen to be $o(T)$, and we now focus on the 
main terms arising from Lemma \ref{lemma4}, beginning with the first main term.  This contributes 
\begin{equation} 
\label{5.3} 
T \zeta(2\sigma_0) \sum_{h,k} \frac{\mu(h)\mu(k)a(h)a(k)}{(hk)^{2\sigma_0}} (h,k)^{2\sigma_0}. 
\end{equation} 
Write $h=h_1h_2$ where $h_1$ is composed only of primes below $Y$, and $h_2$ of primes between $Y$ and $X$, and then 
$a(h) =a_1(h_1) a_2(h_2)$ in the notation of section 3.  Writing similarly $a(k)=a_1(k_1)a_2(k_2)$, we see that the quantity in 
\eqref{5.3} factors as 
\begin{equation} 
\label{5.4} 
T \zeta(2\sigma_0) \Big(\sum_{h_1,k_1} \frac{\mu(h_1)\mu(k_1)a_1(h_1)a_1(k_1)}{(h_1k_1)^{2\sigma_0}} (h_1,k_1)^{2\sigma_0}\Big) 
\Big( \sum_{h_2,k_2} \frac{\mu(h_2)\mu(k_2)a_2(h_2)a_2(k_2)}{(h_2k_2)^{2\sigma_0}} (h_2,k_2)^{2\sigma_0}\Big). 
\end{equation}  

Consider the first factor in \eqref{5.4}.  If we ignore the condition that $h_1$ and $k_1$ must have at most $100 \log \log T$ 
prime factors, then the resulting sum is simply 
$$ 
\sum_{\substack{ h_1, k_1 \\ p|h_1k_1\implies p\le Y}} \frac{\mu(h_1)\mu(k_1)}{(h_1k_1)^{2\sigma_0}} (h_1,k_1)^{2\sigma_0}
= \prod_{p\le Y} \Big(1- \frac{1}{p^{2\sigma_0}}\Big). 
$$ 
In approximating the first factor by the product above, we incur an error term which is at most (by symmetry we may suppose that 
$h_1$ has many prime factors) 
\begin{align*}
&\ll \sum_{\substack{ h_1, k_1 \\ p|h_1k_1\implies p\le Y \\ \Omega(h_1) >100\log \log T}} \frac{|\mu(h_1)\mu(k_1)|}{(h_1k_1)^{2\sigma_0}} (h_1,k_1)^{2\sigma_0}\\
& \ll  e^{-100\log \log T}  \sum_{\substack{ h_1, k_1 \\ p|h_1k_1\implies p\le Y }} \frac{|\mu(h_1)\mu(k_1)|}{(h_1k_1)^{2\sigma_0}} (h_1,k_1)^{2\sigma_0} e^{\Omega(h_1)}\\
& \ll 
(\log T)^{-100} \prod_{p\le Y} \Big( 1+ \frac{1+2e}{p} \Big) \ll   (\log T)^{-90} .
\end{align*} 
Similarly one obtains that the second factor in \eqref{5.4} is 
$$ 
\prod_{Y<p\le X} \Big(1-\frac{1}{p^{2\sigma_0}}  \Big) + O((\log \log T)^{-90}). 
$$ 
Using these in \eqref{5.4}, we obtain that the first main term is 
$$ 
\sim T\zeta(2\sigma_0) \prod_{p\le X} \Big(1-\frac{1}{p^{2\sigma_0}}\Big) = T \prod_{p> X} \Big(1-\frac{1}{p^{2\sigma_0}}\Big)^{-1} 
\sim T
$$ 
since $(\sigma_0 - \tfrac 12)^{-1} = o(\log T / \log X)$. 
In the same way we see that the second main term arising from Lemma \ref{lemma4} is 
\begin{align*}
&\zeta(2-2\sigma_0) \Big(\int_T^{2T} \Big(\frac{t}{2\pi}\Big)^{1-2\sigma_0} dt \Big)  \sum_{h,k} \frac{\mu(h)\mu(k)a(h)a(k)}{hk} (h,k)^{2-2\sigma_0} \\
& \sim  \Big(\int_T^{2T} \Big(\frac{t}{2\pi}\Big)^{1-2\sigma_0} dt \Big)  \zeta(2-2\sigma_0) 
\prod_{p\le X} \Big(1- \frac{2}{p} + \frac{1}{p^{2\sigma_0}}\Big) = o(T).
\end{align*}
This completes our proof of \eqref{5.2}, and hence of Proposition \ref{Prop4}.    
   
 \section{Proof of Lemma \ref{lemma4}} 
 
 \noindent Put $G(s) = \pi^{-s/2} s(s-1) \Gamma(s/2)$, so that $\xi(s) = G(s) \zeta(s)=\xi(1-s)$ is the completed zeta function.   
 Define for any given $s\in {\Bbb C}$ 
 $$ 
 I(s) = I(\overline{s}) = \frac{1}{2\pi i} \int_{(c)} \xi(z+s) \xi(z+\overline{s}) e^{z^2} \frac{dz}{z}, 
 $$ 
 where the integral is over the line from $c-i\infty$ to $c+i\infty$ for any $c>0$.   By moving the 
 line of integration to the left, and using the functional equation $\xi(z+s)\xi(z+\overline{s}) = \xi(-z+(1-s))\xi(-z+(1-\overline{s}))$ we obtain that 
 \begin{equation} 
 \label{6.1} 
 |\zeta(s)|^2 = \frac{1}{|G(s)|^2} \Big( I(s) + I(1-s) \Big). 
 \end{equation} 
   
From now on suppose that $s=\sigma+it$ with $T\le t\le 2T$, and $1 \ge \sigma  \ge \tfrac 12$.   If $z$ is a complex number with real part $c=1-\sigma + 1/\log T$, then an application of Stirling's formula gives 
$$ 
\frac{G(z+s)G(z+\overline{s})}{|G(s)|^2} =\Big( \frac{t}{2\pi }\Big)^{z} \Big( 1+ O\Big( \frac{|z|}{T}\Big)\Big) .
$$ 
Therefore, we see that 
$$ 
\frac{I(s)}{|G(s)|^2} = \frac{1}{2\pi i}\int_{(1-\sigma+1/\log T)} \Big(\frac{t}{2\pi}\Big)^{z} \zeta(z+s)\zeta(z+\overline{s}) 
e^{z^2}\frac{dz}{z} +  O(T^{-\sigma +\epsilon}). 
$$  
Since we are in the region of absolute convergence of $\zeta(z+s)$ and $\zeta(z+\overline{s})$, we obtain 
\begin{align}
\label{6.2}
\int_T^{2T} \Big(\frac{h}{k} \Big)^{it} \frac{I(s)}{|G(s)|^2}dt 
&= \frac{1}{2\pi i} \int_{(1-\sigma+1/\log T)} \frac{e^{z^2}}{ z} \sum_{m,n=1}^{\infty} 
\frac{1}{(mn)^{z+\sigma}} \Big( \int_T^{2T} \Big(\frac{hm}{kn}\Big)^{it} \Big(\frac{t}{2\pi}\Big)^z dt \Big) dz \nonumber\\
&\hskip .5 in + O(T^{1-\sigma+\epsilon}).  
\end{align}
In the integral in \eqref{6.2}, we distinguish the diagonal terms $hm=kn$ from the off-diagonal terms $hm\neq kn$.  The 
diagonal terms $hm=kn$ may be parametrized as $m=Nk/(h,k)$ and $n=Nh/(h,k)$, and therefore these terms contribute 
\begin{equation} 
\label{6.3} 
\frac{1}{2\pi i} \int_{(1-\sigma+1/\log T)} \frac{e^{z^2}}{ z} \zeta(2z+2\sigma) \Big( \frac{(h,k)^2}{hk}\Big)^{z+\sigma} 
\Big(\int_{T}^{2T} \Big(\frac{t}{2\pi}\Big)^z dt \Big) dz. 
\end{equation} 
As for the off-diagonal terms, the inner integral over $t$ may be bounded by $\ll T^{1-\sigma} \min(T, 1/|\log (hm/kn)|)$, and 
therefore these contribute 
\begin{equation} 
\label{6.4} 
\ll T^{1-\sigma} \sum_{\substack{m,n =1 \\ hm\neq kn}}^{\infty} \frac{1}{(mn)^{1+1/\log T}} \min \Big( T , \frac{1}{|\log (hm/kn)|}\Big) 
\ll \min(h,k) T^{1-\sigma+\epsilon}. 
\end{equation} 
The final estimate above follows by first discarding terms with $hm/(kn)>2$ or $<1/2$, and for the remaining terms (assume that $k\le h$) noting that for a given $m$ the sum over values $n$ may be bounded by $kT^{\epsilon}$ (here it may be useful to distinguish the cases $hm>T$ and $hm<T$).     
 
 From \eqref{6.2}, \eqref{6.3} and \eqref{6.4}, we conclude that 
 \begin{align} 
 \label{6.5} 
   \int_T^{2T} \Big(\frac{h}{k} \Big)^{it} \frac{I(s)}{|G(s)|^2}dt& = \frac{1}{2\pi i} \int_{(1-\sigma+1/\log T)} \frac{e^{z^2}}{ z} \zeta(2z+2\sigma) \Big( \frac{(h,k)^2}{hk}\Big)^{z+\sigma} 
\Big(\int_{T}^{2T} \Big(\frac{t}{2\pi}\Big)^z dt \Big) dz \nonumber \\
&\hskip .5 in + O(\min(h,k) T^{1-\sigma+\epsilon}).
\end{align} 
 A similar argument gives 
 \begin{align} 
 \label{6.6} 
 \int_T^{2T}\Big(\frac{h}{k} \Big)^{it} &\frac{I(1-s)}{|G(s)|^2}dt =  O(T^{1-\sigma+\epsilon} \min(h,k)) \nonumber\\ 
 &+ \frac{1}{2\pi i} \int_{(\sigma+1/\log T)} 
 \frac{e^{z^2}}{z} \zeta(2z+2-2\sigma)\Big(\frac{(h,k)^2}{hk}\Big)^{z+1-\sigma} \Big(\int_T^{2T} \Big(\frac{t}{2\pi} \Big)^{z+1-2\sigma} dt\Big) dz.
 \end{align}    
 With a suitable change of variables, we can combine the main terms in \eqref{6.5} and \eqref{6.6} as 
 $$ 
 \frac{1}{2\pi i} \int_{(1+1/\log T)} \zeta(2z) \Big(\frac{(h,k)^2}{hk}\Big)^z \Big(\int_{T}^{2T} \Big(\frac{t}{2\pi} \Big)^{z-\sigma} dt\Big) 
 \Big( \frac{e^{(z-\sigma)^2}}{z-\sigma} + \frac{e^{(z-1+\sigma)^2}}{z-1+\sigma}\Big) dz, 
 $$    
 and moving the line of integration to the left we obtain the main term of the lemma as the residues of the poles at $z=\sigma$ and $z=1-\sigma$ (note that the potential pole at $z=1/2$ from $\zeta(2z)$ is canceled by a zero of $e^{(z-\sigma)^2}/(z-\sigma) + e^{(z-1+\sigma)^2}/(z-1+\sigma)$ there).   This completes our proof of Lemma \ref{lemma4}.
 
 \section{Discussion}

 \noindent In common with Selberg's proof of Theorem \ref{mainthm} our proof relies on the Gaussian distribution of short sums over 
 primes, as in Proposition \ref{Prop2}.  In contrast with Selberg's proof, we do not need to invoke zero density estimates for $\zeta(s)$,  %and we do not need to derive a formula expressing $\log \zeta(s)$ in terms of primes and zeros; 
 the easier mean-value theorem in Proposition \ref{Prop4} provides for us a sufficient substitute.  Selberg's original proof also 
 used an intricate argument expressing $\log \zeta(s)$ in terms of primes and zeros; an elegant alternative approach was given 
 by Bombieri and Hejhal \cite{BomHej}, although they too require a strong zero density result near the critical line.  We should also point out that by just focussing on the central limit theorem, we have not obtained asymptotic formulae for the moments of $\log |\zeta(\tfrac 12+it)|$ which Selberg established.

  In Selberg's approach, it was easier to handle Im $\log \zeta(\tfrac 12+it)$, and the case of $\log |\zeta(\tfrac 12+it)|$ entailed additional technicalities.  In contrast, our method works well for $\log |\zeta(\tfrac 12+it)|$ but requires substantial modifications to handle Im $\log \zeta(\tfrac 12+it)$.  The reason is that Proposition \ref{Prop4} guarantees that typically $|\zeta(\sigma_0+it)| \approx |M(\sigma_0+it)|^{-1}$, but it could be that Im $ \log \zeta(\frac 12+it)$ 
 and Im $ \log M(\sigma_0+it)^{-1}$ are not typically close but differ by a substantial integer multiple of $2\pi$.  
In this respect our argument shares some similarities with Laurin\v{c}ikas's proof of Selberg's central limit theorem \cite{Laur}, 
 which relies on bounding small moments of $|\zeta(\tfrac 12+it)|$ using Heath-Brown's work on fractional moments \cite{HBFrac1}. In particular, Laurin\v{c}ikas's argument also breaks down for the imaginary part of $\log \zeta(\tfrac 12 + it)$.

 %may be thought of as encoding simultaneously a weak version of both such results.
 
We can quantify the argument given here, providing a rate of convergence to the limiting distribution.  
 With more effort (in particular taking higher moments in Lemma \ref{lemma2} to obtain better bounds on the exceptional set there) 
 we can recover previous results in this direction, but have not been able to obtain anything stronger.  We also remark that the argument also gives the joint distribution of $\log |\zeta(\tfrac 12+it)|$ and $\log |\zeta(\tfrac 12+it +i\alpha)|$ (for any fixed non-zero $\alpha \in {\Bbb R}$) and shows that these are distributed like independent Gaussians.  One can allow for more than one shift, and also keep track of the uniformity in $\alpha$.   
 
%Furthermore with a bit more work our argument recovers the same rates of convergence to the Gaussian as obtained by Selberg
%. The latter requires some small modifications such as taking more moments of $\mathcal{P}_1$ and $\mathcal{P}_2$ 
%in the proof of Lemma \ref{lemma2} to obtain better bounds for the exceptional set on 
%which $M(\sigma_0 + it)$ is not close to the Euler product. 

Our proof of Proposition \ref{Prop3} (in Section 4) involved splitting the mollifier $M(s)$ into two factors, or equivalently of the prime sum ${\mathcal P}(s)$ into two pieces.  We would have liked to get away with just one prime sum, but this barely fails.  In order to use Proposition \ref{Prop1} successfully, we are forced to take $W= o(\sqrt{\log \log T})$.   To mollify successfully on the 
$\frac 12+\frac{W}{\log T}$ line (see Proposition \ref{Prop4}) we need to work with primes going up to roughly $T^{\frac 1W}$.  If $W=o(\sqrt{\log \log T})$ then this length is $T^{A/\sqrt{\log \log T}}$ for a large parameter $A$, and if we try to expand $\exp({\mathcal P}(s))$ into a series (as in Section 4) we will be forced to take more than $\sqrt{\log \log T} $ terms in the exponential series.  This 
leads to Dirichlet polynomials that are just a little too long.  We resolve this (see Section 4) by splitting ${\mathcal P}$ into two terms, exploiting the fact that the longer sum ${\mathcal P}_2$ has a significantly smaller variance.

 Propositions \ref{Prop1}, \ref{Prop2}, and \ref{Prop3} in our argument are quite general and analogs may be established for 
 higher degree $L$-functions in the $t$-aspect.  An analog of Proposition \ref{Prop4} however can at present only be established for $L$-functions of degree $2$ (relying here upon information on the shifted convolution problem), and unknown for degrees $3$ or higher.   However, some hybrid results are possible.   For example, by adapting the techniques in \cite{CIS1, CIS2} we can establish an analog of Proposition \ref{Prop4} for twists of a fixed $GL(3)$ $L$-function by primitive Dirichlet characters with conductor below $Q$.  In this way one can show that as $\chi$ ranges over all primitive Dirichlet characters with conductor below $Q$, and $t$ ranges between $-1$ and $1$, the distribution of $\log |L(\tfrac 12+it, f\times \chi)|$ is approximately normal with mean $0$ and variance $\sim \tfrac 12\log \log Q$; here $f$ is a fixed eigenform on $GL(3)$.

 Keating and Snaith \cite{KeSn1} have conjectured that central values of $L$-functions in families have a log normal distribution with an appropriate mean and  variance depending on the family.   For example, we may consider the family of quadratic Dirichlet $L$-functions $L(\tfrac 12,\chi_d)$ where $d$ ranges over fundamental discriminants of size $X$.  In this setting, we may carry out the arguments of Propositions \ref{Prop2}, \ref{Prop3} and \ref{Prop4} and conclude that $\log L(\sigma_0,\chi_d)$ has a normal distribution with mean $\sim \tfrac 12\log \log X$ and variance $\sim \log \log X$, provided that $\sigma_0 =\tfrac 12+ \frac{W}{\log X}$ where $W$ is any function with $W\to \infty$ as $X\to \infty$ and with $\log W = o(\log \log X)$.   However in this situation we do not have an analog of Proposition \ref{Prop1} allowing us to pass from this to the central value; indeed, our knowledge at present does not exclude the possibility that $L(\tfrac 12,\chi_d)=0$ for a positive proportion of discriminants $d$.  
 
 Finally we remark that the proof presented here was suggested by earlier work of the authors \cite{RS1}, where general one sided central limit theorems towards the Keating-Snaith conjectures are established.  
 
 \bibliography{Refs}{}
 \bibliographystyle{plain}  
\end{document}